\documentclass{article}

\usepackage{authblk}

\usepackage{amssymb}
\usepackage{amsmath}
\usepackage{amsthm}
\usepackage[mathscr]{eucal}

\usepackage[active]{srcltx}

\newtheorem{thrm}{Theorem}[section]
\newtheorem{lemma}[thrm]{Lemma}

\newtheorem{cor}[thrm]{Corollary}

\newtheorem*{main*}{Main Theorem}

\theoremstyle{definition}

\theoremstyle{remark}

\numberwithin{equation}{section}

\usepackage[utf8]{inputenc}

\newcommand{\dbar}{$\bar{\partial}$}
\newcommand{\mdbar}{\bar{\partial}}

\newcommand{\D}{\Omega}

\newcommand{\zb}{\bar{z}}

\newcommand{\Lb}{\overline{L}}


\makeatletter
\newcommand{\opnorm}{\@ifstar\@opnorms\@opnorm}
\newcommand{\@opnorms}[1]{%
  \left|\mkern-1.5mu\left|\mkern-1.5mu\left|
   #1
  \right|\mkern-1.5mu\right|\mkern-1.5mu\right|
}
\newcommand{\@opnorm}[2][]{%
  \mathopen{#1|\mkern-1.5mu#1|\mkern-1.5mu#1|}
  #2
  \mathclose{#1|\mkern-1.5mu#1|\mkern-1.5mu#1|}
} \makeatother

\begin{document}

\title
 {Subelliptic estimates
 	for the \dbar-problem on complex algebraic surfaces with isolated singularities}

\author{Dariush Ehsani}

\affil{ FIZ Karlsruhe - Leibniz Institute for Information Infrastructure\\
	Department of Mathematics\\D-10587 Berlin\\
\textit{E-mail address}: \texttt{dehsani.math@gmail.com}
\vskip.1cm\
}

\date{}

\maketitle

\bibliographystyle{plain}

\begin{abstract}
	We obtain subelliptic estimates
	for the \dbar-problem on complex algebraic surfaces embedded in $\mathbb{C}^n$ with isolated singularities. 
	$W^{\epsilon}$ Sobolev norms of a form,
	$f$, for $0< \epsilon < 1$
	are estimated in terms of weighted $L^2$ norms of 
	$\mdbar f$ and $\mdbar^{\ast}f$, with weights
	 which vanish at the singularities,
	  as well as weighted 
	$L^2$ norms of $f$, with weights which blow up at the
	 singularities.
\end{abstract}

\section{Introduction}

We let $X$ be an algebraic surface over $\mathbb{C}$
 embedded in $\mathbb{C}^n$ with isolated singularities.  
The main goal of this article is to obtain subelliptic estimates 
for the \dbar-problem on $X$.  Subelliptic estimates 
 are an important topic in the theory of the 
\dbar-Neumann problem, and in particular 
 provide regularity of the solution to the 
 \dbar-problem.  Whereas on smooth domains
 where such estimates are related to the geometry of 
the boundary, on complex spaces with isolated 
 singularities little is known about the regularity of the
\dbar-problem.  There has however been significant progress
 made in the study of the $L^2$-cohomology for the \dbar-operator,
\cite{OV13} (see also \cite{Rup19}).

A study of 
 subellitptic estimates on complex spaces with isolated 
  singularities was initiated in \cite{ER} 
 with the example $z^2=xy$ in $\mathbb{C}^3$.
The current article builds off the idea of the work on 
 that example but achieves considerably more
  generality.  Namely, we obtain results which apply to all complex algebraic surfaces with isolated 
 singularities.  

We work with some of the following simplifications, without any loss 
 of generality.  As the theory of the \dbar-Neumann problem is 
well established on smooth manifolds, we will work in a 
 neighborhood of an isolated singularity.  We assume the
singularity lies at the origin and $D^n(1) \cap (X-0)$,
 where $D^n(1)$ is the ball of radius 1,
  contains no other singularities.

For our main result, we let $f$ be a $(p,q)$-form
($0\le p,q \le 2$) with support in 
$D^n(1) \cap X$ and with
 $ f\in  \mbox{dom}(\mdbar)
 \cap \mbox{dom}(\mdbar^{\ast})$,
that is 
 $\mdbar f  \in L_{(p,q+1)}^2(X)$ and 
 $\mdbar^{\ast} f \in L_{(p,q-1)}^2(X)$.  
For some $0<\epsilon \le 1$,
  we also suppose
$(r^{\epsilon}\log(r))^{-1} f \in L_{(p,q)}^2(X)$,
where $r = |z|$.   We will often drop the
 designation of the form type in the notation 
 of the Sobolev spaces; thus, $L^2(X)$ will also stand,
for instance, for $L^2_{(0,1)}(X)$ where appropriate. 
 We 
establish the
\begin{main*}
	 For $f$ as above we have 
$f\in W^{\epsilon}(X)$ with the estimates
\begin{equation*}
\|f\|_{W^{\epsilon}(X)}
\lesssim
\left\|  r^{1-\epsilon} \mdbar f\right\|_{L^2(X)} +
\left\| r^{1-\epsilon}\mdbar^{\ast} f\right\|_{L^2(X)}
+\left\|  \frac{1}{r^{\epsilon} \log(r)} f \right\|_{L^2(X)}.
\end{equation*}
\end{main*}

The intermediate Sobolev norms are defined
 by interpolation
(see \cite{BL}), and the first step in the proof
 is to establish $W^1(X)$ estimates (so that interpolation 
  can follow).  The works of
\cite{HsPa} and the related \cite{Na89}
 are essential in our use of
 coordinates which are particularly
helpful in establishing estimates for the
 intermediate norms.  

The author wishes to acknowledge and express 
 sincere gratitude for   
many fruitful mathematical exchanges with
 Jean Ruppenthal.  These discussions began 
during the author's employment 
with the Complex Analysis Group
 at the University of Wuppertal
and contributed in essential ways towards the completion 
 of this work.

\section{Sobolev 1 estimate}
\label{sectionSobolev1}

If we first assume that a 
smooth form, $f$, is supported in
a neighborhood of the origin, but away from the singularity at 0,
we can follow the establishment of the 
 Morrey-Kohn-H\"{o}mander identity
(see for instance \cite{CS} 
Proposition 4.3.1 and Proposition 5.1.1), using 
integration by parts to show
\begin{equation}
\label{estWithConstants}
\|  f\|_{W^1(X)}^2 \le C_1 \left(\| \mdbar f\|^2_{L^2(X)} +
\| \mdbar^{\ast} f\|^2_{L^2(X)}\right)
+ C_2 \|f\|_{L^2(X)}.
\end{equation}
   The constants,
$C_1$ and  $C_2$, in the
above inequality, however, may depend on $f$;
for instance, it is not known beforehand that
 a finite number of charts 
 suffices to cover a neighborhood 
  of the singularity, the constants of the
 above inequality depending on
  derivatives of local cutoffs subordinate to
 the charts.

Our strategy is to cover 
$D^n(1) \cap (X-0)$,
with neighborhoods such that
each neighborhood is contained
 in a chart (to which we will refer here as a {\it resolution chart}) 
  obtained by a resolution of the singularity
at the origin. 
A resolution 
leads to a finite number of charts which cover
$U = D^n(1)\cap(X-0)$. 
The charts are of the form
\begin{equation}
\label{charts}
\begin{aligned}
&z_1 = u^{n_1}v^{m_1},\\
&z_2 = f_2(z_1) + u^{n_2}v^{m_2},\\
&z_i = f_i(z_1) + u^{n_i}v^{m_i}g_i(u,v),
\end{aligned}
\end{equation}
where $n_i\ge n_j$, $m_i\ge m_j$ for
$i>j$, with $n_1 m_i - n_i m_1 \neq 0$ for
$i\neq 1$,
$f_i$ and $f_i'$ are holomorphic functions
of $u$ and $v$, and $f_i = o(z_1)$, and
the $g_i$ are local units (are holomorphic with
non-zero constant terms in their series expansions),
see \cite{HsPa}.
We refer the reader to \cite{Lau} for
background information on the resolution 
of singularities via quadratic transformations.

Set $N = \partial U = \partial D^n(1)\cap (X-0)$.
From \cite{HsPa}, there is a
piecewise smooth diffeomorphism,
\begin{equation}
\label{diffeoMap}
h: N \times (0,1] \rightarrow D^n(1)\cap(X-0).
\end{equation}
The diffeomorphism is obtained by 
covering $\partial U$ with neighborhoods, and in each neighborhood
following flow lines from points on $\partial U$ to the origin
(the flow lines are piecewise smooth). 
 The covering of 
$\partial U$ is chosen so that 
 each neighborhood and its trace along flow lines to the origin 
 is contained in a coordinate chart obtained by resolving
 the singularity at 0 with
 repeated quadratic transformations, and so the traces along
flow lines of the covering of $\partial U$ constitutes a covering
 of $U$, written as $U= \cup_i U_i$.  From above, 
 $U_i$ is covered by a resolution chart, and
we can write $U_i = \cup_{j=1}^{j_i} U_{ij}$, where the
 $U_{ij}$ are the regions on which 
  $h^{-1}$ is smooth,
with the property that
  $U_{i j_k} \cap U_{ij_l} 
 = \partial U_{i j_k}
  \cap \partial U_{ij_l}$
 for $k\neq l$.   Since the $U_i$ consititute a finite
  covering,
we have
\begin{equation*}
 \| f \|_{W^1(U)}
 \simeq \sum_i \| f \|_{W^1(U_i)},
\end{equation*}
and
if we write
$L_1^i$ and $L_2^i$ for the holomorphic vector fields 
 on $U_i$,
we can then write for a function, $f$,
\begin{equation*}
\| f \|_{W^1(U_i)}
\simeq \sum_{j=1}^{j_i}\sum_{k=1}^2 \| L_k^{i}
f
\|_{L^2({U}_{ij})} 
+  \| \Lb_k^{i} f \|_{L^2({U}_{ij})} + \| f\|_{L^2(U_i)}.
\end{equation*}
We thus have
\begin{equation}
\label{sob1L1L2}
\| f \|_{W^1(U)}
\simeq \sum_{i}\sum_{j=1}^{j_i}\sum_{k=1}^2 \| L_k^{i}
f
\|_{L^2({U}_{ij})} 
+  \| \Lb_k^{i} f \|_{L^2({U}_{ij})} + \| f\|_{L^2(U)}.
\end{equation}


Using a family of smooth cutoff functions,
$\{\varphi_i\}_i$ subordinate 
 to the above covering of $\partial U$, we then extend
the cutoff functions along the flow lines in each
 $U_i$.  With this construction,
 each $\varphi_i$ will depend only on variables
$\theta^i,x^i,y^i$ on 
$N = \partial U$ (see also the next section),
 and derivatives of $\varphi_i$ are bounded.
  We can thus write
\begin{equation*}
	\| f \|_{W^1(U)}
	\simeq \sum_{i,j} \| \varphi_i f \|_{W^1(U_{ij})}.
\end{equation*}
In what follows, we shall write
 $f^i$ for $\varphi_i f$. 
 The $\varphi_i$ functions allow us to assume without
loss of generality that $\mbox{supp}(f^i) \cap \partial U_i
 \subset \{0\}$, which we shall do in the next paragraph.

We now return to \eqref{estWithConstants}, which was
 obtained with the assumption of support away from the singularity.
In order to obtain estimates for 
all $f$, without restricting the support away from 0, we use 
cutoffs $\mu_k$ as in \cite{PS91}.  $\mu_k = \mu_k(|z|)$ is a smooth function
  with the property $\mu_k = 0$ for 
$|z| <   e^{-e^{k+1}}$ and 
$\mu_k = 1$ for $|z| > e^{-e^{k}}$.
  Furthermore,
\begin{equation*}
| d \mu_k(z) | \lesssim \frac{ \chi_k (|z|^2)}
{|z| \log |z| },
\end{equation*}
where $\chi_k$ is the characteristic function of
$[e^{-e^{k+1}},e^{-e^{k}}]$.  From 
\eqref{sob1L1L2}, we have
\begin{equation*}
\|  \mu_l f \|_{W^1(U)}^2 \simeq 
\sum_{i}\sum_{j=1}^{j_i}\sum_{k=1}^2 
\| L_k^{i} \mu_l f^i \|_{L^2({U}_{ij})} 
+  \| \Lb_k^{i} \mu_l f^i \|_{L^2({U}_{ij})} + \| \mu_l f\|_{L^2(U)}.
\end{equation*}
The above inequalities for Sobolev norms on functions 
 can be applied to the case of forms component-wise. 
Then following the proof of 
the Morrey-Kohn-H\"{o}mander identity
(see \cite{CS} Proposition 4.3.1 and Proposition 5.1.1), 
we can integrate by parts
 in an expression for
$\| \mdbar (\mu_k f^i)\|^2_{L^2(X)} 
+ \|\mdbar^{\ast}(\mu_k f^i)\|^2_{L^2(X)}$,
 taking into account
 the fact that the (non-zero) boundary integrals which arise along
$\partial U_{ij}$ all cancel, then sum over the $U_i$ neighborhoods,
  and write
\begin{equation}
\label{MKHwithMu_k}
\begin{aligned}
\|  \mu_k f \|_{W^1(X)}^2 \lesssim& \|  \mdbar (\mu_k f)\|^2_{L^2(X)} +
\|  \mdbar^{\ast}  (\mu_k f)\|^2_{L^2(X)}
+  \left\|  \mu_k f \right\|^2_{L^2(X)}\\
\lesssim & 
\| \mu_k  \mdbar f\|^2_{L^2(X)} +
\|  \mu_k  \mdbar^{\ast} f\|^2_{L^2(X)}
+  \left\| \frac{\chi_k(r^2)}{r\log(r)} f \right\|^2_{L^2(X)},
\end{aligned}
\end{equation}
which also takes into account estimates away from the singularity
 (obtained by classical estimates on smooth manifolds).

Define $X_k := X \cap \mbox{supp}(\mu_k)$.
Note that
\begin{equation*}
\|  f \|_{W^1(X_k)}^2 \lesssim 
\| \mu_{k+1} f \|_{W^1(X)}^2  
\end{equation*}
and
 for $f\in W^1(X)$, 
\begin{equation*}
\|  f \|_{W^1(X)}^2  =  \lim_{k\rightarrow \infty} \|  f \|_{W^1(X_k)}^2  
\end{equation*}
so that
letting $k\rightarrow\infty$ in \eqref{MKHwithMu_k} leads to
\begin{equation}
\label{basicEstLog}
\|  f \|_{W^1(X)}^2 \lesssim  \left(\| \mdbar f\|_{L^2(X)}^2 +
\|  \mdbar^{\ast} f\|_{L^2(X)}^2\right)
+  \left\| \frac{1}{r\log(r)} f \right\|_{L^2(X)}.
\end{equation}

\section{Useful coordinates}
\label{sectionCoorSys}

In \eqref{sob1L1L2}, we estimate 
 the $W^1(X)$ norm by finding holomorphic 
vector fields and estimating $L^2$ norms of the vector 
 fields (and their conjugates) acting on a function (or form).
In this section we use quasi-isometries proved in
\cite{HsPa} and updated in 
 \cite{Na88} and \cite{Na89}
to define an equivlent norm
 to 
that in \eqref{sob1L1L2} 
which will aid in our comparison
  of intermediate (between 0 and 1) Sobolev norms with that
 of $W^1(X)$.

With
\begin{equation*}
r = \left( \sum_{i=1}^n |z_i|^2 \right)^{1/2},
\end{equation*}
a coordinate system over the region $U_i$ may
  be chosen in which $r$ is one coordinate and
local coordinates  $\theta^i,x^i,y^i$ on 
$N = \partial U$ form the other. 
 This is the coordinate system used to describe the
diffeomorphism, $h$, in \eqref{diffeoMap}.  The pullback of
\begin{equation}
\label{eucMetric}
dz_1 d\zb_1+ dz_2d\zb_2+ \cdots dz_n d\zb_n,
\end{equation}
under $h$ is quasi-isometric to either a metric of 
Cheeger type \cite{HsPa},
\begin{equation}
\label{Cheeger}
 dr^2 + r^2(d\theta^i)^2 + r^{2\alpha_i}\left( (dx^i)^2 + (dy^i)^2 \right),
\end{equation}
in a region $U^i$, where $\alpha_i \ge 1$, or a metric of the form
\begin{equation}
\label{NagaseMetric}
dr^2 + r^2(d\theta^i)^2 + r^{2\alpha_i}
 \left( (ds^i)^2 + (r^{b_i} +(s^i))^2 (d\varphi^i)^2\right)
\end{equation}
\cite{Na89}
 (with $x^i = s^i\cos(\varphi^i)$, $ y^i = s^i\sin(\varphi^i) $)
 for $0<b_i<1$.

The respective volume forms over a 
 region $U_i$ have the property
\begin{equation}
\label{volumeHP}
dV^i
\simeq  r^{2\alpha_i + 1} dr d\theta^i dx^i dy^i,
\end{equation}
using \eqref{Cheeger},
or
\begin{equation}
\label{volumeNa}
dV^i
\simeq  r^{2\alpha_i + 1} (r^{b_i}+s^i) dr d\theta^i ds^i d\varphi^i,
\end{equation}
using \eqref{NagaseMetric}.

We can thus estimate $\|f\|_{W^1(X)}$ as defined by
 \eqref{sob1L1L2} with the use of vector fields
\begin{equation}
\label{vfEasy}
  \partial_r, \frac{1}{r}\partial_{\theta^i}, 
   \frac{1}{r^{\alpha_i}}\partial_{x^i},  \frac{1}{r^{\alpha_i}}\partial_{y^i}
\end{equation}
and the volume form
 in \eqref{volumeHP}, respectively
\begin{equation}
\label{vfNa}
\partial_r, \frac{1}{r}\partial_{\theta^i}, 
\frac{1}{r^{\alpha_i}}\partial_{s^i},  \frac{1}{r^{\alpha_i}}
 \frac{1}{r^{b_i}+s^i} \partial_{\varphi^i}
\end{equation}
and the volume form
in \eqref{volumeNa},
  over a region,
 $U_i$.

\section{Intermediate Sobolev spaces}

In this section we look at intermediate Sobolev spaces,
 between $L^2(X)$ and $W^1(X)$, where $W^1(X)$ 
 is defined in \eqref{sob1L1L2}.
We obtain estimates for the intermediate spaces in terms of
$L^2(X)$ and $W^1(X)$ norms.  

Recall that $U = D^n(1)\cap (X-0)$, which is diffeomorphic,
via $h$ in \eqref{diffeoMap}, to $\partial U\times(0,1]$.
We note that
\begin{lemma}
	\label{lemmaDensity}
	$W^1(U)$ is dense in $L^2(U)$.
\end{lemma}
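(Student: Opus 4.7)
The plan is to exhibit $C_c^\infty(U)$ as a subspace of $W^1(U)$ that is already dense in $L^2(U)$. The point is that although $U$ has the origin as a limit point of the singular variety, the open set $U$ itself sits inside the smooth manifold $X\setminus\{0\}$, so only standard real-analytic machinery is needed once we stay away from $\{0\}$ and from $\partial D^n(1)\cap X$.

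First I would observe that $U$ is an open subset of the smooth complex manifold $X\setminus\{0\}$ and that the $L^2(U)$ inner product is induced from a smooth volume form on $X\setminus\{0\}$. Standard real analysis then gives that $C_c^\infty(U)$ is dense in $L^2(U)$. Concretely, given $f\in L^2(U)$, one first multiplies by the cutoff $\mu_k$ of Section~\ref{sectionSobolev1} to kill mass near the singular point and then by a radial cutoff supported in $\{r<1-1/k\}$ to pull support away from $\partial D^n(1)\cap X$; dominated convergence gives convergence to $f$ in $L^2(U)$, and a standard mollification on the smooth manifold $X\setminus\{0\}$ then produces smooth compactly supported approximations.

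Second I would verify $C_c^\infty(U)\subset W^1(U)$. Fix $\phi\in C_c^\infty(U)$ with compact support $K\subset U$; since $K$ is compact in the open set $U$, there exists $\delta>0$ with $\delta\le r\le 1-\delta$ on $K$. On each chart $U_i$ the vector fields \eqref{vfEasy} or \eqref{vfNa} then have coefficients $r^{-1}$, $r^{-\alpha_i}$, and $(r^{b_i}+s^i)^{-1}$ all uniformly bounded in terms of $\delta$ on $K$, while the volume forms \eqref{volumeHP} and \eqref{volumeNa} are comparable on $K$ to standard Lebesgue measure in the local coordinates. Since $\phi$ is smooth with bounded derivatives, each summand in the expression \eqref{sob1L1L2} for $\|\phi\|_{W^1(U)}$ is finite, so $\phi\in W^1(U)$.

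The only point requiring a little care is matching the mollification step with the support constraint: the mollifier radius must be smaller than the distance from the support to $\{0\}$ and to $\partial D^n(1)\cap X$. Both distances are strictly positive because the support is compact in the open set $U$, so this is routine. Combining the two steps yields $C_c^\infty(U)\subset W^1(U)$ together with density of $C_c^\infty(U)$ in $L^2(U)$, and hence density of $W^1(U)$ in $L^2(U)$.
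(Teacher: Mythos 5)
Your proposal is correct and follows essentially the same route as the paper: approximate an $L^2(U)$ function by smooth compactly supported functions whose support is pulled away from the singular point (and from $\partial D^n(1)\cap X$), and note that such functions lie in $W^1(U)$ because the weighted vector fields and volume forms are uniformly comparable to standard ones on compact subsets of $U$. The paper phrases the truncation via the sets $U_\varepsilon=h(N\times(\varepsilon,1))$ rather than via the $\mu_k$ cutoffs, and leaves the inclusion $C_0^\infty(U_\varepsilon)\subset W^1(U)$ implicit, but the underlying argument is the same.
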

\begin{proof}	
	
	For any $\varepsilon >0$,
	denote $U_{\varepsilon} = h(N\times(\varepsilon,1))$.
The lemma follows from the density of $C^{\infty}_0(U_{\varepsilon})$
 in $L^2(U_{\varepsilon})$ ($U_{\varepsilon}$ is a complex 
manifold, without singularities) and the density of 
$L^2(U_{\varepsilon})$  in $L^2(U)$, using extensions by zero 
 to relate a function in $L^2(U_{\varepsilon})$ to one in $L^2(U)$.
\end{proof}

Using the density of $W^1(U)$ in $L^2(U)$, there exists
 a positive, self-adjoint operator, $\Lambda$,
such that 
\begin{equation*}
 (u,v)_{W^1(U)} = (\Lambda u, \Lambda v)_{L^2(U)} \qquad \forall u,v \in W^1(U).
\end{equation*} 
Intermediate Sobolev norms can be defined 
 in terms of powers of the operator $\Lambda$ as in
  \cite{LiMa} (Section 2.1).  We denote these spaces by $W^{\epsilon}(U)$:
$W^{\epsilon}(U) = \mbox{dom}(\Lambda^{\epsilon})$ for
 $0\le \epsilon \le 1$, with norm
\begin{equation*}
 \| f\|_{W^{\epsilon}(U)} = \|f\|_{L^2(U)} +  \|\Lambda^{\epsilon} f\|_{L^2(U)} .
\end{equation*}

There are some techniques which allow one to interpolate between the spaces.
 Two methods, the "$K$-method" and the "$J$-method," can be 
  used to interpolate between $L^2(U)$ and $W^1(U)$, each 
with equivalent norms.  We refer the reader to \cite{BL} for 
a description of these methods.
  From Theorem 15.1 in \cite{LiMa} 
 the spaces $W^{\epsilon}(U)$ as defined above are equivalent to
  those produced by using
 interpolation via the $K$-method, and give equivalent norms.  Furthermore, 
  we can use a norm described as in \cite{BL} to calculate the $K$-method
   norms.  
To calculate the $W^{\epsilon}$-norm ($0<\epsilon<1$) of a function,
 $f$, known to be in $W^1(U)$, we will use that
  $\|f\|_{W^{\epsilon}(U)}$ is
 comparable to the infimum of the norm
\begin{multline}
I(u)=
\\
\left( \int_0^{\infty}
\left\| t^{1-\epsilon} u(t)
\right\|^2_{W^1(X)} t^{-1}
dt\right)^{1/2} + \left(\int_0^{\infty}
\left\| t^{1-\epsilon} u'(t)
\right\|^2_{L^2(X)} t^{-1}
dt\right)^{1/2}
\label{defnIu}
\end{multline}
taken over all
\begin{equation*}
u:[0,\infty) \rightarrow L^2(X)
+ W^1(X)
\end{equation*}
with $u(0)=f$ such that $u(t)$ is locally integrable 
in $W^1(X)$, and $u'(t)$, defined in the distributional
sense, is locally integrable in $L^2(X)$, and such that
the two terms on the right-hand side of 
\eqref{defnIu} are finite (see Section 3.12, in particular
Theorem 3.12.2, in \cite{BL}).

We use this to show
the
\begin{thrm}
	\label{ThrmSubellIneq}
	Let $X$ be a complex surface with singularities.  Let
	$W^1(X)$ be defined by \eqref{sob1L1L2}, and the
intermediate Sobolev spaces $W^{\epsilon}(X)$ as above.
Then for $f$ a smooth function in 
$U=D^n(1)\cap(X-0)$, we have the estimates
	\begin{equation*}
	\|f\|_{W^{\epsilon}(X)}
	\lesssim
	\| r^{1-\epsilon} f\|_{W^1(X)} +
	\| r^{-\epsilon} f\|_{L^2(X)},
	\end{equation*}
for $0<\epsilon<1$.
\end{thrm}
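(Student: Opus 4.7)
The plan is to exploit the $K$-method characterization of the $W^{\epsilon}$-norm via the functional $I(u)$ in \eqref{defnIu}: I will produce an explicit curve $u:[0,\infty)\to L^2(X)+W^1(X)$ with $u(0)=f$ whose $I(u)$ is controlled by the right-hand side of the claimed inequality. Taking the infimum over competitors will then give the theorem directly.

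Since the weights on the right-hand side are powers of $r$, the natural choice couples the scale parameter $t$ to a radial cutoff. I would fix a smooth $\eta:[0,\infty)\to[0,1]$ with $\eta(s)=0$ for $s\le 1/2$ and $\eta(s)=1$ for $s\ge 1$, and set $u(t):=\eta(r/t)f$ for $t>0$. Then $u(t)\to f$ in $L^2(X)$ as $t\to 0^+$, and $u(t)\equiv 0$ for $t\ge 2$ since $r\le 1$ on $U$. A direct differentiation gives $u'(t)=-\eta'(r/t)(r/t^2)f$, which is supported in $\{t/2\le r\le t\}$ with $|u'(t)|\lesssim |f|/t$ there.

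For the second integral in $I(u)$, switching the order of integration via Fubini yields
\[
\int_0^\infty \|t^{1-\epsilon} u'(t)\|_{L^2(X)}^2\, t^{-1}\,dt \;\lesssim\; \int_X |f|^2 \int_{r\le t\le 2r} t^{-2\epsilon-1}\,dt\,dV \;\simeq\; \|r^{-\epsilon} f\|_{L^2(X)}^2.
\]
For the first integral, I would use the concrete description of $\|\cdot\|_{W^1(U)}$ in \eqref{sob1L1L2} via the vector fields $V$ listed in \eqref{vfEasy} and \eqref{vfNa}. Each such $V$ either annihilates $\eta(r/t)$ (in the tangential cases) or, when $V=\partial_r$, contributes a factor bounded by $1/t$ on $\{t/2\le r\le t\}$. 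Writing $V(\eta(r/t)f)=V(\eta(r/t))f+\eta(r/t)Vf$, squaring, multiplying by $t^{2-2\epsilon}$, integrating against $dt/t$, and swapping orders of integration exactly as above produces the bound
\[
\sum_V \|r^{1-\epsilon} Vf\|_{L^2(X)}^2 \;+\; \|r^{-\epsilon} f\|_{L^2(X)}^2
\]
for the first integral.

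The remaining point, which I expect to be the main bookkeeping issue but not a genuine obstacle, is to pass from $\sum_V \|r^{1-\epsilon} Vf\|_{L^2}$ to $\|r^{1-\epsilon} f\|_{W^1(X)}$. Since each $V$ either kills $r^{1-\epsilon}$ or differentiates it to $(1-\epsilon)r^{-\epsilon}$, the commutator $[V, r^{1-\epsilon}]$ is pointwise bounded by $r^{-\epsilon}$, so the discrepancy between $\|r^{1-\epsilon} Vf\|_{L^2}$ and $\|V(r^{1-\epsilon} f)\|_{L^2}$ is absorbed into $\|r^{-\epsilon} f\|_{L^2}$, which is precisely the second term on the right. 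Combining the two estimates bounds $I(u)$ by the right-hand side of the theorem, and this completes the proof.
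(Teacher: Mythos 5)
Your argument is correct, and it is a genuinely different (and arguably cleaner) construction of the interpolating curve than the one used in the paper. The paper's proof also works with the trace-method functional $I(u)$ from \eqref{defnIu}, but it chooses the radial \emph{translate}
\begin{equation*}
u^i(t) = f^i(r+t,\theta^i,x^i,y^i)\,\eta(t),
\end{equation*}
with a cutoff $\eta$ in the interpolation parameter $t$ alone, and then performs the change of variable $r'=r+t$ chart by chart, tracking the powers $r^{2\alpha_i+1}$ (resp.\ the Nagase factor $r^{b_i}+s^i$) in each volume form and treating the Cheeger and Nagase metric types separately. Your choice of a radial \emph{cutoff} $u(t)=\eta(r/t)f$, by contrast, is globally defined (it uses only the function $r$), its derivative $u'(t)$ is supported in the single annulus $\{t/2\le r\le t\}$, and the Fubini swap $\int t^{\pm}\,dt/t$ over a dyadic annulus automatically produces the clean weights $r^{-2\epsilon}$ and $r^{2-2\epsilon}$ without ever referring to $\alpha_i$ or $b_i$; the only chart-dependent fact you invoke is that, among the frame fields of \eqref{vfEasy} and \eqref{vfNa}, only $\partial_r$ fails to annihilate the purely radial cutoff. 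The small bookkeeping point you defer at the end — commuting $r^{1-\epsilon}$ past the frame fields — is exactly as you describe: only $\partial_r$ produces a nonzero commutator term $(1-\epsilon)r^{-\epsilon}f$, which lands in the second term of the right-hand side, while the tangential fields $\frac{1}{r}\partial_{\theta^i}$, $\frac{1}{r^{\alpha_i}}\partial_{x^i}$, $\frac{1}{r^{\alpha_i}}\partial_{y^i}$ (and their Nagase analogues) annihilate any radial weight, so nothing is lost. One minor omission worth noting for completeness: the $W^1$-norm in \eqref{sob1L1L2} includes an $L^2$ summand, so the first term of $I(u)$ also contributes $\int_0^\infty t^{1-2\epsilon}\|u(t)\|_{L^2}^2\,dt\simeq\|r^{1-\epsilon}f\|_{L^2}^2$, which is harmless since $\|r^{1-\epsilon}f\|_{L^2}\le\|r^{1-\epsilon}f\|_{W^1(X)}$ (or, since $r\le 1$, $\le\|r^{-\epsilon}f\|_{L^2}$). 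With that observed, your estimate of $I(u)$ is complete and the theorem follows.
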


The estimate is obviously true for functions supported away from
a singularity and so we prove Theorem \ref{ThrmSubellIneq} for functions
supported in a neighborhood of the singularity.

\begin{proof}

As in Section \ref{sectionSobolev1}, we assume
 $x=0 \in X$ is a singular point, and 
there are no other singularities in $U$.
We write 
 $U = \cup_i U_i$, where $U_i$ is covered by a resolution chart.  Further recall that we write
$U_i = \cup_{j=1}^{j_i} U_{ij}$, where the
$U_{ij}$ are the regions on which 
$h^{-1}$ is smooth, where $h$ is the 
diffeomorphism from 
\eqref{diffeoMap}
 
 We assume 
$f$ is supported in $D^n(s)\cap X$
 for some $s<1$.
 Over $U_i$ we write
$f^i:=f\big|_{U_i}$ in terms of coordinates
 $r,{\theta}^i,
 {x}^i,{y}^i$:  
$f^i=f^i(r,{\theta}^i,
{x}^i,{y}^i)$. 
Define
\begin{align*}
u^i(t) &= f^i(r+t,{\theta}^i,
{x}^i,{y}^i)\eta(t)\\
&:= f^i_t\eta(t),
\end{align*}
where $\eta(t)\in C^{\infty}_0(\overline{\mathbb{R}}^+)$ with the
properties $\eta(t) \equiv 1$ near $t=0$ and $\eta(t)\equiv 0$
for $t>\delta$, where $\delta$ is chosen small enough so
that the support of $f$ is contained in 
$h(N\times (0,1-\delta])$.  Note
that $u^i(0)=f^i$ holds.  
We also use the properties of the
 volume form 
$dV^i$ given in \eqref{volumeHP}
 and \eqref{volumeNa}.

For the first term on the right-hand side of
\eqref{defnIu}, we calculate
\begin{equation}
\begin{aligned}
\label{t1-eNorm}
\int_0^{\infty}
\left\| t^{1-\epsilon} u^i(t)
\right\|^2_{W^1(X)} t^{-1}
dt&\\
\lesssim&
\int_0^{\delta}
t^{1-2\epsilon}
\sum_{i,j,k}  \int_{{U}_{ij}} 
\left(
|L_k^i  u^i |^2 +|\Lb_k^i  u^i |^2
\right) dV^i dt\\
& + \int_0^{\delta}
t^{1-2\epsilon}
\left\| u^i
\right\|^2_{L^2(X)}  dt ,
\end{aligned}
\end{equation}
where $L_1^i$ and $L_2^i$ are as in 
 Section \ref{sectionSobolev1}, \eqref{sob1L1L2}.

Let $U_i^{\sigma}$ denote the slice
 $U_i \cap h(N\times\{\sigma\})$ and
  $U_{ij}^{\sigma} = U_{ij}\cap U_i^{\sigma}$.
Thus, for instance, $U_i^1$ is $U_i \cap \partial U$.
We first handle the case of 
\eqref{volumeHP}.  We separate the integrals over the tangential components
using the properties of \eqref{volumeHP} and \eqref{volumeNa} 
for the volume element:
\begin{multline}
\label{normalTangDecomp}
 \int_{{U}_{ij}} 
\bigg(
|L_k^i  u^i |^2 +|\Lb_k^i  u^i |^2
\bigg) dV^i \\
\simeq 
 \int_0^1
 \|\partial_{r} u^i \|_{L^2(U_{ij}^1)}^2 
 r^{2\alpha_i+1} dr 
+ \int_0^1
 \| u^i \|_{W^1(U_{ij}^1)}^2 
 r^{2\alpha_i+1} dr.
\end{multline}

We make a change of variables
$r'=r+t$, and with this 
change we estimate
\begin{align*}
\int_{0}^{\delta}\int_0^1
 \|\partial_{r} u^i\|_{L^2(U_{ij}^1)}^2 
 &r^{2\alpha_i+1} t^{1-2\epsilon} dr dt\\
\lesssim&
\int_{0}^{\delta} \eta^2(t) \int_t^1
\|\partial_{r'} f^i \|_{L^2(U_{ij}^1)}^2 
(r'-t)^{2\alpha_i+1} t^{1-2\epsilon} dr' dt.
\end{align*}
We now change the order of the 
$r'$ and $t$ integrations, and estimate
\begin{multline}
\label{dr'}
\int_0^{\delta}
\int_0^{r'}     
  \|\partial_{r'} f^i \|_{L^2(U_{ij}^1)}^2 
(r'-t)^{2\alpha_i+1}
\eta^2(t) t^{1-2\epsilon} dt d{r}'\\
+ 
\int_{\delta}^1
\int_0^{\delta}  
\|\partial_{r'} f^i \|_{L^2(U_{ij}^1)}^2 
 (r'-t)^{2\alpha_i+1}
\eta^2(t) t^{1-2\epsilon} dt d{r}'.
\end{multline}
We use
\begin{equation*}
\int_0^{r'}
(r'-t)^{2\alpha_i+1}
\eta^2(t) t^{1-2\epsilon} 
dt \lesssim (r')^{2\alpha_i + 3-2\epsilon}
\end{equation*}
and
\begin{align*}
\int_0^{\delta}
(r'-t)^{2\alpha_i+1}
\eta^2(t) t^{1-2\epsilon} 
dt \lesssim& (r')^{2\alpha_i + 3-2\epsilon}
\end{align*}
since $\delta<r'$ in the second integral in \eqref{dr'}.

The integrals in \eqref{dr'} can now be bounded by
\begin{equation*}
\int_0^{1} 
\|\partial_{r'} f^i \|_{L^2(U_{ij}^1)}^2 
(r')^{2\alpha_i + 3-2\epsilon}
 d{r}'
  =\int_0^{1} 
  \|\partial_{r} f^i \|_{L^2(U_{ij}^1)}^2 
  r^{2\alpha_i + 3-2\epsilon}
  d{r},
\end{equation*}
and we have
\begin{align*}
\int_{0}^{\delta}\int_0^1
 \|\partial_{r} u^i \|_{L^2(U_{ij}^1)}^2 
 r^{2\alpha_i+1} t^{1-2\epsilon} dr dt
  \lesssim& \int_0^{1} 
  \|\partial_{r} f^i \|_{L^2(U_{ij}^1)}^2 
  r^{2\alpha_i + 3-2\epsilon}
  d{r}.
\end{align*}

We can similarly estimate
\begin{equation*}
 \int_{0}^{\delta}\int_0^1
 \|D_{\tau}^i u^i \|_{L^2(U_{ij}^1)}^2 
 r^{2\alpha_i+1}t^{1-2\epsilon} dr dt,
\end{equation*}
where $D_{\tau}^i$ is one of the
 last three vector fields in 
\eqref{vfEasy}.  For example,
with $D_{\tau}^i = r^{-\alpha_i} \partial_{x^i}$, we have
\begin{align*}
\int_{0}^{\delta}\int_0^1
\|D_{\tau}^i u^i \|_{L^2(U_{ij}^1)}^2 
r^{2\alpha_i+1}t^{1-2\epsilon} dr dt
 \simeq
\int_{0}^{\delta}\int_0^1
\|\partial_{x^i}u^i \|_{L^2(U_{ij}^1)}^2 
rt^{1-2\epsilon} dr dt
\end{align*}
and proceeding as above leads to
\begin{align*}
\int_{0}^{\delta}\int_0^1
\|\partial_{x^i}u^i \|_{L^2(U_{ij}^1)}^2 
rt^{1-2\epsilon} dr dt
\lesssim& \int_0^1
\|\partial_{x^i}f^i \|_{L^2(U_{ij}^1)}^2 
r r^{2-2\epsilon}  dr\\
\lesssim& 
 \int_0^1
\|r^{-\alpha_i} \partial_{x^i}f^i \|_{L^2(U_{ij}^1)}^2 
r^{2\alpha_i + 1} r^{2-2\epsilon}  dr.
\end{align*}

Thus the second integral on the right-hand side of
 \eqref{normalTangDecomp} 
  inserted into \eqref{t1-eNorm} 
   is bounded by
\begin{equation*}
\int_0^1
\|f^i\|_{W^1(U_{ij}^1)}^2 
r^{2\alpha_i+1} r^{2-2\epsilon} dr.
\end{equation*}

Similar estimates for the second integral on the 
 right hand side of \eqref{t1-eNorm} yield
\begin{equation*}
 \int_0^{\delta}
 t^{1-2\epsilon}
 \left\| u^i
 \right\|^2_{L^2(U_{ij})}  dt 
\lesssim 
 \|r^{1-\epsilon} f^i\|_{L^2(U_{ij})}^2 .
\end{equation*} 
 
Putting all this together in \eqref{t1-eNorm} yields
\begin{align*}
\int_0^{\infty}
 \big\| t^{1-\epsilon} u(t)
 \big\|^2_{W^1(U_{ij})} t^{-1}
 dt
 \lesssim&
 \int_0^1
\|\partial_r f^i \|_{L^2(U_{ij}^1)}^2 
r^{2\alpha_i+1}r^{2-2\epsilon} dr \\
&+ \int_0^1
\| f^i \|_{W^1(U_{ij}^1)}^2 
r^{2\alpha_i+1}r^{2-2\epsilon} dr\\
 \lesssim &
  \| r^{1-\epsilon} f \|_{W^1(U_{ij})}
   + \| r^{-\epsilon} f \|_{L^2(U_{ij})}. 
\end{align*}

In the case the metric is isometric
 to \eqref{NagaseMetric}, we proceed in a 
similar manner.   We have
\begin{equation*}
\sum_j \int_{{U}_{ij}} 
\bigg(
|L_k^i  u^i |^2 +|\Lb_k^i  u^i |^2
\bigg) dV^i 
\simeq 
 \sum_{k=1}^4 
\int_{{U}_{ij}}
 |D_k^i u^i |^2
dV^i ,
\end{equation*}
where $D_k^i$ are the vector fields in
 \eqref{vfNa}.
Let us handle the cases
$D_k^i = \frac{1}{r^{\alpha_i}}\partial_{s^i}$ and
 $D_k^i= \frac{1}{r^{\alpha_i}}
\frac{1}{r^{b_i}+s^i} \partial_{\varphi^i}$.  
 We first estimate
\begin{align*}
\int_0^{\delta} \int_0^1
\int
&
\left| \frac{1}{r^{\alpha_i}} \partial_{s^i} u^i\right|^2 
( r^{b_i}+s^i) r^{2\alpha_i + 1}  t^{1-2\epsilon} 
ds d\varphi^i d\theta^i
dr dt\\
  = &
\int_0^{\delta} \int_0^1
\int |\partial_{s^i} u^i|^2
 (r^{b_i}+s^i) r t^{1-2\epsilon} ds d\varphi^i
d\theta^i dr dt \\
=&
\int_0^{\delta} \int_t^1
\int |\partial_{s^i} u^i|^2
((r' - t) ^{b_i}+s^i) (r' - t) t^{1-2\epsilon}
 ds^i d\varphi^i d\theta^i dr'   dt.
\end{align*}
As above, we change the order of integration and 
 estimate
\begin{multline*}
\int
\int_0^{\delta} \int_0^{ r' }
|\partial_{s^i} u^i|^2
((r' - t) ^{b_i}+s^i) (r' - t)  t^{1-\epsilon} dt dr' 
ds^i d\varphi^i
 d\theta^i
 \\
 +
\int
\int_{\delta}^1 \int_0^{ \delta }
|\partial_{s^i} u^i|^2
((r' - t) ^{b_i}+s^i) (r' - t)  dt dr' 
ds^i d\varphi^i d\theta^i.
\end{multline*}
We use
\begin{multline*}
\int_0^{\delta} \int_0^{ r' }
|\partial_{s^i} u^i|^2
((r' - t) ^{b_i}+s^i)  (r' - t) 
 t^{1-\epsilon} dt dr' 
  \lesssim \\ 
\int_0^{\delta} 
|\partial_{s^i} f^i|^2
((r' ) ^{b_i}+s^i)  (r')^{3-2\epsilon}   dr' 
\end{multline*}
and
\begin{multline*}
\int_{\delta}^1 \int_0^{ \delta }
|\partial_{s^i} u^i|^2
((r' - t) ^{b_i}+s^i) (r' - t)  t^{1-\epsilon} dt dr' 
 \lesssim \\ 
\int_{\delta}^1 
|\partial_{s^i} f^i|^2
((r' ) ^{b_i}+s^i)  (r')^{3-2\epsilon}   dr' .
\end{multline*}

We have
\begin{align*}
\int_0^{\delta} \int_0^1
\int
\left| \frac{1}{r^{\alpha_i}} \partial_{s^i} u^i\right|^2 
( r^{b_i}+s^i) &r^{2\alpha_i + 1}  t^{1-2\epsilon} 
ds d\varphi^i d\theta^i
dr dt\\
\lesssim& 
\int \int_{0}^1 
 |\partial_{s^i} f^i|^2
 (r ^{b_i}+s^i)  r^{3-2\epsilon}   dr
 ds^i d\varphi^i
 d\theta^i \\
= &
\int \int_0^1
\left| \frac{1}{r^{\alpha_i}} \partial_{s^i} f^i\right|^2 
( r^{b_i}+s^i) r^{2\alpha_i + 3-2\epsilon} dr 
ds d\varphi^i d\theta^i\\
\simeq&
\int
\left| \frac{1}{r^{\alpha_i}} \partial_{s^i} f^i\right|^2 
r^{2-2\epsilon}  dV^i
 .
\end{align*}

Similarly, in the case
$D_k^i= \frac{1}{r^{\alpha_i}}
\frac{1}{r^{b_i}+s^i} \partial_{\varphi^i}$
we estimate
\begin{align*}
\int_0^{\delta} \int_0^1 \int
&
| \partial_{\varphi^i} u^i|^2 
( r^{b_i}+s^i)^{-1} r t^{1-2\epsilon} 
ds d\varphi^i d\theta^i
 dr  dt\\
=&
\int_0^{\delta} \int_t^1
\int 
|\partial_{\varphi^i} u^i|^2
\frac{ (r' - t)}{(r' - t) ^{b_i}+s^i}
t^{1-2\epsilon}
ds^i d\varphi^i
d\theta^i dr'   dt.
\end{align*}
As above, we change the order of integration and 
estimate
\begin{multline*}
\int \int_0^{\delta} \int_0^{ r' }
|\partial_{\varphi^i} u^i|^2
\frac{ (r' - t)}{(r' - t) ^{b_i}+s^i}
t^{1-2\epsilon}
dt dr' 
ds^i d\varphi^i
d\theta^i
\\
+
\int \int_{\delta}^1 \int_0^{ \delta }
|\partial_{\varphi^i} u^i|^2
\frac{ (r' - t)}{(r' - t) ^{b_i}+s^i}
t^{1-2\epsilon}
 dt dr' 
ds^i d\varphi^i d\theta^i.
\end{multline*}
We use
\begin{equation*}
\frac{ (r' - t)}{(r' - t) ^{b_i}+s^i} 
 \lesssim 
\frac{ r' }{(r') ^{b_i}+s^i}
\end{equation*}
(for $r'>t$ and $0<b_i<1$)
in both integrals to estimate
\begin{align*}
\int_0^{\delta} \int_0^1 \int
| \partial_{\varphi^i} u^i|^2 
( r^{b_i}+s^i)^{-1} &r t^{1-2\epsilon} 
ds d\varphi^i d\theta^i
dr  dt
 \\
\lesssim  & 
\int \int_{0}^1 
|\partial_{\varphi^i} f^i|^2
\frac{ r}{r^{b_i}+s^i}
r^{2-2\epsilon}
dr
ds^i d\varphi^i d\theta^i\\
  \lesssim&
\int
\left| \frac{1}{r^{\alpha_i}} \frac{1}{(r^{b_i} +s^i)}  \partial_{\varphi^i} f^i \right|^2
r^{2-2\epsilon}
dV^i.
\end{align*}

The other vector fields in \eqref{vfNa} are handled 
 similarly and we obtain in the case the metric over
$U_i$ is quasi-isometric to \eqref{NagaseMetric}
\begin{equation*}
\int_0^{\infty}
\big\| t^{1-\epsilon} u(t)
\big\|^2_{W^1(U_{ij})} t^{-1}
dt
\lesssim 
\| r^{1-\epsilon} f \|_{W^1(U_{ij})}
+ \| r^{-\epsilon} f \|_{L^2(U_{ij})}
\end{equation*}
as above.

For the second integral in \eqref{defnIu}
we use
\begin{align*}
\partial_t u^i(t)=& (\partial_t f^i_t )\eta(t) + f^i_t \eta'(t)\\
=& (\partial_{r} f^i_t ) \eta(t) 
+ f^i_t \eta'(t).
\end{align*}
Therefore,
\begin{multline}
\label{secondL2Int}
\int_0^{\infty} \left\| t^{1-\epsilon} \partial_t u^i(t)
\right\|^2_{L^2(X)} t^{-1}
dt
\lesssim 
\sum_{ij}  \int_0^{\delta}
 \int_{{U}_{ij}} 
 | \partial_r f^i_t |^2 t^{1-2\epsilon} dV^i dt\\
 +\sum_{ij} \int_0^{\delta}
\int_{U_{ij}} |  f^i_t |^2
t^{1-2\epsilon} dV^i dt.
\end{multline}
Using a change of coordinates
 $r=r' +t$ as above
 in the first integral above yields
\begin{align*}
\int_0^{\delta}
  \int_{U_{ij}} 
| \partial_r f^i_t|^2 t^{1-2\epsilon} dV^i dt
  \lesssim& \int 
 |\partial_{r} f^i |^2 
 r^{2-2\epsilon}
 dV^i\\
 \lesssim&  \| r^{1-\epsilon} f^i \|_{W^1(U_{ij})}
 + \| r^{-\epsilon} f^i \|_{L^2(U_{ij})}. 
\end{align*}
Summing over $U_{ij}$ shows the first integral on the
 right-hand side of \eqref{secondL2Int} is bounded by
\begin{equation*}
 \| r^{1-\epsilon} f\|_{W^1(X)} +
 \| r^{-\epsilon} f\|_{L^2(X)}.
\end{equation*}

 Estimates for the second term on the right of
  \eqref{secondL2Int} follow as 
those above, and
in terms of integrals over $X$, we have
\begin{equation*}
\int_0^{\delta}
\left\| t^{1-\epsilon} \partial_t u^i(t)
\right\|^2_{L^2(X)} t^{-1}
dt  
\lesssim
\| r^{1-\epsilon} f\|_{W^1(X)} +
\| r^{-\epsilon} f\|_{L^2(X)}.
\end{equation*}

\end{proof}

\section{Approximation by smooth forms}

Theorem \ref{ThrmSubellIneq}
was proved under the condition of smoothness 
of the function to be
 estimated.  We use an approximation argument in this
section to broaden the class of functions (or forms)
 to which the theorem applies. 

Let us define the weighted $L^{2}$ norms
\begin{equation*}
 L^{2,-C}(X):= \{f\in L^2(X) : r^{-C} f \in L^2(X) \}
\end{equation*}
for $C>1$.  The motivation for the
 weights, $r^{-C}$, for $C>1$ comes from the 
$L^2$ norms on the right-hand side of 
 \eqref{basicEstLog}.
Recall that weighted norms for forms are defined by
 the norm for functions applied component-wise.
Thus, for instance, $L^{2,-C}_{(p,q)}(X)$ is defined 
 to consist of those $(p,q)$-forms such that
each component is in $L^{2,-C}(X)$.
 
We use the cutoffs $\mu_k$ from Section \ref{sectionSobolev1}.
Recall
$\mu_k = \mu_k(|z|)$ is a smooth function
such that $\mu_k = 0$ for 
$|z| <   e^{-e^{k+1}}$ and 
$\mu_k = 1$ for $|z| > e^{-e^{k}}$,
 and with the property
\begin{equation}
\label{dmu}
| d \mu_k(z) | \lesssim \frac{ \chi_k (|z|^2)}
{|z| \log |z| },
\end{equation}
where $\chi_k$ is the characteristic function of
$[e^{-e^{k+1}},e^{-e^{k}}]$.  With
$f_k : = \mu_k f$ we have
\begin{lemma}
	\label{gdf}
	Let $C>1$ and $f$ with support near the origin
	be such that
	$ f \in L^{2,-C}_{(p,q)}(X) $,
	$\mdbar f\in L^2_{(p,q+1)}(X)$, and $\mdbar^{\ast} f\in L^2_{(p,q-1)}(X)$. Then
	\begin{equation}
	\label{rclim}
	r^{-C}f_{k}
	\rightarrow r^{-C} f \ \ \ \mbox{ in } \ \ L^2_{(p,q)}(X),
	\end{equation}
	and
	\begin{equation}
		\label{difflim}
	\begin{aligned}
	\mdbar f_k \rightarrow  \mdbar f & \mbox{ in } L^2_{(p,q+1)}(X)\\
	\mdbar^{\ast} f_k \rightarrow  \mdbar^{\ast} f &\mbox{ in }  L^2_{(p,q-1)} (X)
	.
	\end{aligned}
	\end{equation}
	Furthermore,
	\begin{equation}
	\label{w1lim}
	f_k \overset{W^1}{\rightarrow} f.
	\end{equation}
\end{lemma}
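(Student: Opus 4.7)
My plan is to prove the three assertions in order, using the same engine throughout: the pointwise bound \eqref{dmu} for $|d\mu_k|$ together with the hypothesis $C>1$. The key observation is that $C>1$ implies $r^{C-1}\le|\log r|$ for $r$ sufficiently small, so that near the singularity
$$\frac{1}{r|\log r|}\le \frac{1}{r^C}.$$
This single inequality will let every weighted $L^2$ term that arises be dominated by $\|r^{-C}f\|_{L^2}$, which is finite by hypothesis.

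For \eqref{rclim}, I would write $r^{-C}(f-f_k)=(1-\mu_k)r^{-C}f$, observe pointwise convergence to $0$ away from the origin, note the domination by $|r^{-C}f|\in L^2(X)$, and invoke dominated convergence.

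For \eqref{difflim}, I would expand via the Leibniz rule, $\mdbar f_k=\mu_k\,\mdbar f+(\mdbar\mu_k)\wedge f$, and similarly write $\mdbar^{\ast} f_k = \mu_k\,\mdbar^{\ast} f + T_k f$ for a zeroth-order commutator $T_k$ whose coefficients are controlled by $|d\mu_k|$. The terms with $\mu_k$ in front converge to $\mdbar f$ and $\mdbar^{\ast} f$ in $L^2$ by dominated convergence. For the commutator terms, plugging \eqref{dmu} into an $L^2$ estimate gives
$$\|(\mdbar\mu_k)\wedge f\|_{L^2}^2\lesssim \int\chi_k(r^2)\,\frac{|f|^2}{r^2(\log r)^2}\,dV\lesssim \int\chi_k(r^2)\,r^{-2C}|f|^2\,dV,$$
and the right-hand side tends to $0$ by absolute continuity of the integral, since $r^{-C}f\in L^2$ and $\mathrm{supp}(\chi_k)$ shrinks to $\{0\}$; the $\mdbar^{\ast}$ case is handled identically.

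For \eqref{w1lim}, I would apply \eqref{basicEstLog} to the difference $f_k-f_l$, obtaining
$$\|f_k-f_l\|_{W^1(X)}^2\lesssim \|\mdbar(f_k-f_l)\|_{L^2}^2+\|\mdbar^{\ast}(f_k-f_l)\|_{L^2}^2+\left\|\frac{1}{r\log r}(f_k-f_l)\right\|_{L^2}^2.$$
By \eqref{difflim} the first two terms on the right tend to $0$ as $k,l\to\infty$, and the third does as well by the opening inequality $(r\log r)^{-1}\le r^{-C}$ combined with \eqref{rclim}. Thus $\{f_k\}$ is Cauchy in $W^1(X)$, and its $W^1$-limit must coincide with its $L^2$-limit $f$, yielding $f\in W^1(X)$ and \eqref{w1lim}. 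The one point requiring care is verifying that \eqref{basicEstLog} legitimately applies to each $f_k-f_l$: since $\mu_k$ is smooth, bounded, with bounded differential, and supported inside $D^n(1)\cap X$, multiplication by $\mu_k$ preserves $\mbox{dom}(\mdbar)\cap\mbox{dom}(\mdbar^{\ast})$ and the relevant support hypothesis, so this is bookkeeping rather than a genuine obstacle.
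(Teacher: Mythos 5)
Your proof is correct and follows essentially the same route as the paper's: the Leibniz rule $\mdbar f_k = \mu_k\mdbar f + (\mdbar\mu_k)\wedge f$, the bound \eqref{dmu} on $|d\mu_k|$, the observation that $C>1$ makes $(r|\log r|)^{-1}\lesssim r^{-C}$ near the origin, dominated convergence for the $\mu_k\mdbar f$ term, and shrinking support for the commutator term. You are in fact a bit more careful than the paper at two points: the paper writes only that $\|\mdbar f_k\|_{L^2}\to\|\mdbar f\|_{L^2}$ (norm convergence) where your argument correctly establishes $\mdbar f_k\to\mdbar f$ strongly in $L^2$, and the paper's proof of \eqref{w1lim} is compressed to a one-line appeal to \eqref{basicEstLog} which your Cauchy argument on $f_k-f_l$ makes explicit. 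The one caveat you flag as ``bookkeeping''—whether \eqref{basicEstLog} legitimately applies to $f_k-f_l$ when $f$ is not assumed smooth—is genuine and is in fact handled in the paper not inside the lemma but in the paragraph that follows it, via mollification of $f_k$ to smooth $f_k^{\delta}$; so it deserves slightly more than a passing remark, though the substance of your argument stands.
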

\begin{proof}
	The limit \eqref{rclim} is clear.  To show \eqref{difflim} we
	estimate
	\begin{align*}
	\| \mdbar \mu_k f \|_{L^2(X)} \le &  
	\| \mu_k \mdbar f\|_{L^2(X)} + \|\mdbar \mu_k \wedge f\|_{L^2(X)} \\
	\lesssim & \| \mu_k \mdbar f\|_{L^2(X)}
	  + \left\|\frac{\chi_k(r^2)}{r\log(r)}  
	   f_k\right\|_{L^2(X )},
	\\
	\lesssim & \| \mu_k \mdbar f\|_{L^2(X)}
	+ \left\|r^{-C}
	f_k\right\|_{L^2(X )}
	\end{align*}
	using \eqref{dmu}.
	
Now letting $k\rightarrow \infty$ we have
	$\| \mu_k \mdbar f\|_{L^2} \rightarrow  \| \mdbar f\|_{L^2} $ 
from the dominated convergence theorem. 
	Thus $\|  \mdbar f_k\|_{L^2} \rightarrow  \| \mdbar f\|_{L^2}$
	 and a similar
	argument shows $\mdbar^{\ast} f_k \overset{L^2}{\rightarrow}
	\mdbar^{\ast} f$.
When combined with 
 \eqref{basicEstLog}, \eqref{rclim} and \eqref{difflim}  yield \eqref{w1lim}.
\end{proof}

In Section \ref{sectionSobolev1}, we showed
\begin{equation*}
\| \varphi \|_{W_1(X)}^2 \lesssim \|\mdbar \varphi \|^2_{L^2(X)} +
\| \mdbar^{\ast}\varphi \|^2_{L^2(X)} + 
 \left\|  \frac{1}{r\log(r)}\varphi \right\|_{L^2(X)}
\end{equation*}
for smooth forms, $\varphi$.  
Now with $f \in L^{2,-C}_{(p,q)}(X)$ for $C>1$, 
$\mdbar f\in L^2_{(p,q+1)}(X)$, and 
$\mdbar^{\ast} f\in L^2_{(p,q-1)}(X)$,
and with
 $f_k = \mu_k f$ as above, we define smooth
$f_k^{\delta}$ with the use of mollifiers
 so that 
\begin{equation*}
 \begin{aligned}
	  r^{-C}f_k^{\delta}
	 \overset{L^2}{\rightarrow} & r^{-C} f_k\\
  	\mdbar f_k^{\delta}
  	\overset{L^2}{\rightarrow} & \mdbar f_k\\
  	\mdbar^{\ast} f_k^{\delta}
  	\overset{L^2}{\rightarrow} & \mdbar^{\ast} f_k
  \end{aligned}
\end{equation*}
(as $\delta\rightarrow 0$).

 Letting $\varphi = f_k^{\delta}$ in the above estimates
 and letting $\delta\rightarrow 0$,
gives
\begin{equation*}
\| f_k \|_{W^1(X)}^2 \lesssim \|\mdbar f_k\|^2_{L^2(X)} +
\| \mdbar^{\ast}f_k\|^2_{L^2(X)} + 
 \left\|  \frac{1}{r\log(r)} f_k  \right\|_{L^2(X)}.
\end{equation*}


We can now apply Lemma
\ref{gdf} and let $k\rightarrow \infty$.  We obtain
for $f\in L^{2,-C}_{(p,q)}(X)$ ($0\le p, q\le 2$), with
$\mdbar f\in L^2_{(p,q+1)}(X)$, and 
$\mdbar^{\ast} f\in L^2_{(p,q-1)}(X)$,
that $f\in W^1_{(p,q)}(X)$ with estimates
	\begin{equation}
		\label{w1est}
	\|f\|_{W^1(X)}
	\lesssim \| \mdbar f\|^2_{L^2(X)} + \| \mdbar^{\ast} f\|^2_{L^2(X)}
	+ \left\|  \frac{1}{r\log(r)} f \right\|_{L^2(X)}.
	\end{equation}

Combining \eqref{w1est} with
Theorem \ref{ThrmSubellIneq}, we have
\begin{thrm}
	\label{ThrmSobEst}
	 Let $C>1$.
For
$f$ supported near the origin such that
$ f \in L^{2,-C}_{(p,q)}(X) $
and 
 $ f\in L^2_{(p,q)}(X)
 \cap \mbox{dom}(\mdbar)
 \cap \mbox{dom}(\mdbar^{\ast})$
 we have
\begin{equation}
\nonumber
\|f\|_{W^{\epsilon}(X)}
\label{r1repeps}
\lesssim
\left\|  r^{1-\epsilon} \mdbar f\right\|_{L^2(X)} +
\left\| r^{1-\epsilon}\mdbar^{\ast} f\right\|_{L^2(X)}
+\left\|  \frac{1}{r^{\epsilon} \log(r)} f \right\|_{L^2(X)}
\end{equation}
 for 
$0<\epsilon \le 1$. 
\end{thrm}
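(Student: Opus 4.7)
The plan is to combine Theorem~\ref{ThrmSubellIneq} with the $W^1$-estimate \eqref{w1est}, using the approximation procedure just constructed above the statement to bridge the smoothness gap.

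First I would reduce to smooth forms. Theorem~\ref{ThrmSubellIneq} was proved only for smooth functions, whereas the present hypothesis merely puts $f$ in $L^{2,-C}_{(p,q)}(X) \cap \mbox{dom}(\mdbar) \cap \mbox{dom}(\mdbar^{\ast})$. The smooth approximants $f_k^{\delta} = (\mu_k f)^{\delta}$ defined just before the theorem, together with Lemma~\ref{gdf} and the mollifier estimates, yield $L^2$-convergence of $r^{-C} f_k^{\delta}$, $\mdbar f_k^{\delta}$, and $\mdbar^{\ast} f_k^{\delta}$ as $\delta \to 0$ and $k \to \infty$. Since $r$ is bounded on the support and $\epsilon \le 1 \le C$, these convergences transfer to all the weighted norms that appear in the desired estimate.

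Second, apply Theorem~\ref{ThrmSubellIneq} to each $f_k^{\delta}$ to get
\begin{equation*}
\|f_k^{\delta}\|_{W^{\epsilon}(X)} \lesssim \|r^{1-\epsilon} f_k^{\delta}\|_{W^1(X)} + \|r^{-\epsilon} f_k^{\delta}\|_{L^2(X)},
\end{equation*}
and bound $\|r^{1-\epsilon} f_k^{\delta}\|_{W^1(X)}$ by applying \eqref{w1est} to the weighted form $r^{1-\epsilon} f_k^{\delta}$. The Leibniz rule $\mdbar(r^{1-\epsilon} g) = r^{1-\epsilon} \mdbar g + (1-\epsilon) r^{-\epsilon} \mdbar r \wedge g$ and its analogue for $\mdbar^{\ast}$, together with $|\mdbar r| \lesssim 1$, produce the three target terms $\|r^{1-\epsilon} \mdbar f_k^{\delta}\|_{L^2}$, $\|r^{1-\epsilon} \mdbar^{\ast} f_k^{\delta}\|_{L^2}$, and $\|\frac{1}{r^{\epsilon} \log r} f_k^{\delta}\|_{L^2}$ (the last coming from the $\frac{1}{r \log r}$ tail of \eqref{w1est}), plus a residual $\|r^{-\epsilon} f_k^{\delta}\|_{L^2}$ from the cross terms. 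Passing to the limit $\delta \to 0$ and then $k \to \infty$ via Lemma~\ref{gdf} transfers the inequality to $f$ itself.

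The main obstacle will be eliminating the residual $\|r^{-\epsilon} f\|_{L^2}$ term, which is strictly larger than the $\|\frac{1}{r^{\epsilon} \log r} f\|_{L^2}$ appearing in the statement (since $|\log r|$ blows up at the singularity). My expectation is that this is resolved either by absorbing it into the left-hand side through a Hardy-type inequality $\|r^{-\epsilon} f\|_{L^2} \lesssim \|f\|_{W^{\epsilon}(X)}$ derived in the quasi-isometric coordinates of Section~\ref{sectionCoorSys} from the volume forms \eqref{volumeHP} and \eqref{volumeNa}, or, more likely, by repeating the $\mu_k$-cutoff derivation of \eqref{w1est} directly on $\mu_k r^{1-\epsilon} f$, so that the sharp gradient bound $|d\mu_k| \lesssim \chi_k/(r \log r)$ of \eqref{dmu} supplies the $1/\log r$ factor in the cross-term as well, after summation over $k$.
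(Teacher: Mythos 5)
Your plan reproduces the paper's own (very terse) proof: the paper simply says ``Combining \eqref{w1est} with Theorem~\ref{ThrmSubellIneq}'' and states the result, and you correctly fill in the mollification/$\mu_k$ approximation step and the Leibniz-rule expansion. But the gap you flag at the end is genuine, and neither of your proposed remedies closes it. The residual $\|r^{-\epsilon}f\|_{L^2(X)}$ enters from \emph{two} independent sources: the $\|r^{-\epsilon}f\|_{L^2(X)}$ term already present on the right of Theorem~\ref{ThrmSubellIneq}, and the Leibniz cross-term $(1-\epsilon)\,r^{-\epsilon}\,\mdbar r\wedge f$ that arises from differentiating the smooth weight $r^{1-\epsilon}$. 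Your second remedy (redoing the $\mu_k$-cutoff derivation on $\mu_k r^{1-\epsilon}f$) only helps with the piece of the cross-term coming from $\mdbar\mu_k$, which is indeed $\lesssim \chi_k\, (r^\epsilon\log r)^{-1}|f|$ by \eqref{dmu}; the piece coming from $\mdbar\!\left(r^{1-\epsilon}\right)$ is bounded by $r^{-\epsilon}|f|$ with \emph{no} logarithmic improvement, because $\mdbar r$ is merely bounded, and this piece survives every choice of cutoff and the passage $k\to\infty$. Your first remedy (a Hardy inequality $\|r^{-\epsilon}f\|_{L^2}\lesssim\|f\|_{W^\epsilon}$) does not help either, since the $\|r^{-\epsilon}f\|$ term sits on the right-hand side: substituting would only give $\|f\|_{W^\epsilon}\lesssim\cdots+c\|f\|_{W^\epsilon}$ with no control that $c<1$, so absorption fails.

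The honest conclusion is that a straightforward combination of \eqref{w1est} and Theorem~\ref{ThrmSubellIneq} yields
\begin{equation*}
\|f\|_{W^\epsilon(X)}\lesssim\|r^{1-\epsilon}\mdbar f\|_{L^2(X)}+\|r^{1-\epsilon}\mdbar^* f\|_{L^2(X)}+\|r^{-\epsilon}f\|_{L^2(X)},
\end{equation*}
with the weaker (larger) weight $r^{-\epsilon}$ rather than $(r^\epsilon\log r)^{-1}$ in the zeroth-order term; since $(r^\epsilon|\log r|)^{-1}\le r^{-\epsilon}$ on the support of $f$, the stated estimate is strictly stronger and does not follow from the combining argument alone. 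You should either prove the weaker version and note that the hypothesis $f\in L^{2,-C}_{(p,q)}(X)$, $C>1$, guarantees finiteness of $\|r^{-\epsilon}f\|$, or supply an additional argument that converts the $r^{-\epsilon}$ weight into the logarithmic one --- but the two mechanisms you sketch do not accomplish that conversion.
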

If we fix $\epsilon$, we can replace the 
hypothesis that
 $f\in L^{2,-C}_{(p,q)}(X)$ with $\frac{1}{r^{\epsilon} \log(r)} f  \in L^{2}_{(p,q)}(X) $
  as in the Main Theorem.  The theorem can easily be extended to 
the case of forms whose support contains 
 multiple singularities of $X$.

As a final corollary we relate 
 $\left\|  \frac{1}{r^{\epsilon} \log(r)} f \right\|_{L^2(X)}$
  estimates to $L^p$-estimates.
With 
\begin{equation*}
\frac{1}{p}+\frac{1}{p'}=1
\end{equation*}
we have for a function, $f\in L^{2p}(X)$ with support near 0,
\begin{align*}
\left\|  \frac{1}{r^{\epsilon} \log(r)} f \right\|^2_{L^2(X)}
  \lesssim& \|  r^{-\delta} f \|^2_{L^2(X)} \\
  \lesssim& \sum_i
    \left(\int_{U_i} |r^{-2p'\delta}| dV^i
    \right)^{1/p'} \left(\int_{U_i} | f^i |^{2p} dV^i
    \right)^{1/p}
\end{align*}
for $\delta > \epsilon$.
Using
 \eqref{volumeHP} for each
$U_i$ the integration of 
 $r^{-2p'\delta}$ converges for
$p'<2/\delta$, which corresponds to
 $p> 2/(2-\delta)$.
 
We apply the above with a $\delta>1$.
We conclude that with $f\in L^{2p}(X)  $ for
 $p> 2$, we have that $f$ is also in $L^{2,-C}(X)$
for some $C>1$.  If in addition,
 $\mdbar f\in L^2(X)$, and $\mdbar^{\ast} f\in L^2(X)$,
then Theorem \ref{ThrmSobEst} applies, and,
 since $\|r^{-\epsilon}f\| \lesssim \|r^{-\delta}f\|$, we have the
following
\begin{cor}
For a form,
$f$, supported near the origin such that
$ f \in L^{p}(X) \cap
\mbox{dom}(\mdbar)
\cap \mbox{dom}(\mdbar^{\ast}) $ with $p> 4$,
\begin{equation*}
\nonumber
\|f\|_{W^{\epsilon}(X)}
\lesssim
\left\|  r^{1-\epsilon} \mdbar f\right\|_{L^2(X)} +
\left\| r^{1-\epsilon}\mdbar^{\ast} f\right\|_{L^2(X)}
+\|f\|_{L^p(X)}
\end{equation*}
 for 
$0<\epsilon \le 1$. 
\end{cor}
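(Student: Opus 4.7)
The plan is to reduce the corollary to Theorem \ref{ThrmSobEst} exactly along the lines of the paragraph preceding the statement. Specifically, I would show that the $L^p$ hypothesis both places $f$ in $L^{2,-C}(X)$ for some $C>1$ (so that Theorem \ref{ThrmSobEst} is applicable) and simultaneously allows the weighted $L^2$ term $\|(r^{\epsilon}\log r)^{-1} f\|_{L^2(X)}$ appearing on its right-hand side to be dominated by $\|f\|_{L^p(X)}$.

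The first step is to absorb the logarithm: for any fixed $\delta>\epsilon$ one has $(r^{\epsilon}|\log r|)^{-1}\lesssim r^{-\delta}$ near $r=0$, since $r^{\delta-\epsilon}|\log r|\to\infty$ as $r\to 0^+$. The second step is to apply H\"older's inequality on each resolution chart $U_i$ with the exponent pair $p/2$ and $q := p/(p-2)$:
\begin{equation*}
\int_{U_i} r^{-2\delta} |f^i|^2\, dV^i \le \left(\int_{U_i} r^{-2\delta q}\, dV^i\right)^{1/q}\left(\int_{U_i} |f^i|^{p}\, dV^i\right)^{2/p}.
\end{equation*}
Using the volume forms \eqref{volumeHP} and \eqref{volumeNa}, the tangential variables contribute only a bounded factor and the weight integral reduces to $\int_0^1 r^{2\alpha_i+1-2\delta q}\, dr$, which is finite whenever $\delta q<\alpha_i+1$; since $\alpha_i\ge 1$, the condition $\delta q<2$ is sufficient. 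Choosing $\delta\in(1,2/q)$ and setting $C:=\delta$ then yields $\|r^{-C}f\|_{L^2(X)}\lesssim\|f\|_{L^p(X)}$, after which Theorem \ref{ThrmSobEst} applies and the same H\"older bound dominates its right-hand side weighted $L^2$ term by $\|f\|_{L^p(X)}$.

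The only substantive obstacle is the balancing act in choosing $\delta$: one needs $\delta>\epsilon$ for the logarithm to be absorbed and simultaneously $\delta q<2$ for the weight integral to converge. To cover the full range $0<\epsilon\le 1$ one is forced to take $\delta>1$, and combined with $\delta q<2$ this forces $q<2$, i.e., precisely the threshold $p>4$ stated in the hypothesis. Beyond this bookkeeping of exponents, no new analytic input is needed past Theorem \ref{ThrmSobEst} and a single application of H\"older's inequality.
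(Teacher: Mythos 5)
Your argument is correct and is essentially the same as the paper's: absorb the logarithm into a power $r^{-\delta}$, apply H\"older on each chart against the explicit volume forms \eqref{volumeHP}--\eqref{volumeNa} to bound $\|r^{-\delta}f\|_{L^2}$ by $\|f\|_{L^p}$, and track the exponents to find that $\delta>1$ (needed so $f\in L^{2,-C}$ with $C>1$) together with $\alpha_i\ge 1$ forces $p>4$. The only difference is notational (the paper writes the H\"older pair as $p,p'$ and works with $f\in L^{2p}$, while you use $p/2$ and $q=p/(p-2)$ directly), which does not change the argument.
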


\end{document}